\documentclass[11pt,a4paper]{article}
\usepackage[T1]{fontenc}
\usepackage{lmodern}
\usepackage[margin=27mm]{geometry}
\usepackage{amsmath,amssymb,amsthm,mathtools}
\usepackage{booktabs,microtype,xcolor}
\usepackage[colorlinks=true,linkcolor=blue!45!black,citecolor=blue!45!black,urlcolor=blue!45!black]{hyperref}
\hypersetup{
 pdftitle={The Goldbach exceptional set: an exponent of 7/12 and conditional refinements},
 pdfauthor={Janos Pintz and Ingo Althofer},
 pdfsubject={Joint research draft, 10 October 2026}}
\newtheorem{theorem}{Theorem}
\newtheorem{corollary}[theorem]{Corollary}
\newtheorem{lemma}[theorem]{Lemma}
\theoremstyle{remark}

\numberwithin{equation}{section}
\newcommand{\HH}{\mathrm H}
\newcommand{\SSS}{\mathfrak S}
\newcommand{\MM}{\mathfrak M}
\newcommand{\mm}{\mathfrak m}
\newcommand{\dd}{\,\mathrm d}
\DeclareMathOperator{\Rea}{Re}
\DeclareMathOperator{\Ima}{Im}
\title{The Goldbach exceptional set:\\
an exponent of \(7/12\) and conditional refinements}
\author{J\'anos Pintz\thanks{Keywords and phrases: Goldbach conjecture,
exceptional set for Goldbach's problem.\newline
2020 Mathematics Subject Classification: 11P32.\newline
The first author was supported by the National Research Development
and Innovation Office of Hungary, NKFIH, K133819, K147153,
Excellence 151341.} \and Ingo Alth\"ofer}
\date{}

\begin{document}
\maketitle

\begin{center}
\itshape Dedicated to the 70th birthday of Szil\'ard Gy.~R\'ev\'esz
\end{center}

\begin{abstract}
Let \(E(X)\) count the even integers between \(4\) and \(X\) that are not
sums of two primes. We combine Pintz's explicit formula for the major
arcs with the exponential-sum estimate of Maynard, Pandey, and
Radziwi{\l}{\l} to prove
\(E(X)\ll_\varepsilon X^{7/12+\varepsilon}\) whenever all Dirichlet
\(L\)-functions have a common fixed zero-free half-plane
\(\Rea s>\theta_0\), with \(\theta_0<1\).
The uniform \(7/8\) theorem reported in the OpenAI mathematics release
supplies this input.
Under the stronger hypothesis that every nontrivial Dirichlet zero
has real part at most \(\theta\), where \(1/2\leq\theta\leq2/3\), we prove
\[
 E(X)\ll_{\theta,\varepsilon}X^{g(\theta)+\varepsilon},
 \qquad
 g(\theta)=\frac{\theta(5-4\theta)}{2(2-\theta)}.
\]
Thus every fixed \(\theta<2/3\) gives an exponent strictly below \(7/12\).
The latter argument uses a classical fixed-modulus zero-density
estimate and is independent of the new exponential-sum theorem.
\end{abstract}

\section{Introduction and results}

For \(X\geq4\), define
\[
 E(X)=\#\{n\in\mathbb Z:4\leq n\leq X,\ 2\mid n,\
                   n\neq p+q\text{ for all primes }p,q\}.
\]
The primes \(p,q\) may be equal. We omit the even integer \(2\);
this changes other common definitions of \(E(X)\) by at most one.

The study of \(E(X)\) is one of the principal quantitative approaches
to the binary Goldbach problem. Montgomery and Vaughan
\cite{MV} established a power saving \(E(X)\ll X^{1-\delta}\)
for some absolute \(\delta>0\).
Pintz's explicit formula \cite{P1} allows substantially larger major
arcs while retaining the contribution of zeros close to the line
\(\Rea s=1\). Its application in \cite{P2} gives
\(E(X)<X^{0.72}\) for sufficiently large \(X\).
Zhao \cite{Zhao} subsequently obtained \(E(X)\ll X^{0.709}\).

The treatment of possible zeros near one draws on the zero-free
regions and zero-repulsion estimates of Heath-Brown \cite{HB},
and their refinement by Xylouris \cite{Xylouris}.
Pintz and Alth\"ofer, with AI assistance, developed a further series
of unpublished numerical refinements within this analytic framework,
culminating in the reported fixed exponent \(0.684\)
\cite{PA}. These computations retain dependencies between zero counts,
early zero mass, and tails, rather than maximizing them independently.
The complete numerical chain is not used in the proofs below.
Reference~\cite{PA} distinguishes the intermediate \(0.68601\)
calculation from the later \(0.684\) endpoint.

A fixed zero-free half-plane removes the possible generalized
exceptional zeros from the major-arc formula altogether, for
sufficiently large \(X\). The exceptional-set exponent is then
controlled by the minor arcs. The new estimate of Maynard, Pandey,
and Radziwi{\l}{\l} \cite{MPR} replaces the classical exponent \(4/5\)
in a prime exponential sum by \(19/24\). The corresponding
exceptional-set exponents are
\[
 2\cdot\frac45-1=\frac35,
 \qquad
 2\cdot\frac{19}{24}-1=\frac7{12}.
\]
The distinction between these two pairs of exponents is useful.

For \(1/2\leq\theta<1\), let \(\HH_\theta\) denote the assertion
\begin{equation}\label{eq:hypothesis}
 \Rea\rho\leq\theta
 \quad\text{for every nontrivial zero of every primitive
 Dirichlet \(L\)-function.}
\end{equation}
The zeta function is included as the function of conductor one.
Uniformity here means that the same \(\theta\) works for every
character and every height.

\begin{theorem}[A fixed zero-free half-plane]\label{thm:fixed}
Suppose that \(\HH_{\theta_0}\) holds for some fixed
\(1/2\leq\theta_0<1\). Then, for every \(\varepsilon>0\),
\begin{equation}\label{eq:seven}
 E(X)\ll_{\theta_0,\varepsilon}X^{7/12+\varepsilon}.
\end{equation}
\end{theorem}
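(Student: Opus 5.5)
We would prove the theorem by the circle method with the usual \(L^2\) (Bessel/Parseval) approach to exceptional sets. Fix \(X\), put \(N\in(X/2,X]\) even and let \(S(\alpha)=\sum_{n\le X}\Lambda(n)e(n\alpha)\). The aim is to show that \(r(N)=\int_0^1S(\alpha)^2e(-N\alpha)\dd\alpha\) satisfies \(r(N)\gg N/\log\log N\) for all but \(O(X^{7/12+\varepsilon})\) even \(N\) in the range. A dyadic decomposition of \([4,X]\) then yields \eqref{eq:seven}. The prime powers contribute \(O(X^{1/2+\varepsilon})\) to \(r(N)\), which is negligible.

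\textbf{Major arcs.} Take \(Q=X^{\delta}\) with a small fixed \(\delta=\delta(\theta_0,\varepsilon)>0\). Let \(\MM\) be the union of the arcs \(|\alpha-a/q|\le Q/(qX)\) with \(q\le Q\), and let \(\mm\) be its complement. Pintz's explicit formula expresses \(S(a/q+\eta)\) through the characters mod \(q\), with a term for each zero \(\rho\) of \(L(s,\chi)\), \(|\Ima\rho|\le T\), contributing roughly \(\sum_n n^{\rho-1}e(n\eta)\). Under \(\HH_{\theta_0}\) every zero has \(\Rea\rho\le\theta_0\), so the zero terms are \(\ll X^{\theta_0}\) times the number of zeros, which is \(Q^{O(1)}\). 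With \(\delta\) small enough, the total zero contribution to \(\int_\MM\) is \(O(X^{1-\eta_0})\) for some \(\eta_0>0\). In particular there are no exceptional or generalized exceptional zeros to handle. The standard singular-series computation then gives
\[
\int_\MM S^2e(-N\alpha)\dd\alpha=\SSS(N)N+O\bigl(N(\log N)^{-A}+X^{1-\eta_0}\bigr)
\]
uniformly for all even \(N\in(X/2,X]\). The truncation of \(\SSS\) at \(q\le Q\) costs \(O(X^{1-\delta/2+\varepsilon})\) on average, and \(O(Q^{-1+\varepsilon}N)\)-type terms uniformly, since \(N\) is even. Because \(\SSS(N)\gg1\), the major arcs give \(r(N)\gg N\) for every such \(N\), provided the minor-arc integral is \(\le cN\).

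\textbf{Minor arcs via Bessel.} Let \(\mathcal E\) be the set of even \(N\in(X/2,X]\) with \(|\int_\mm S^2e(-N\alpha)\dd\alpha|>cN\). By Bessel's inequality,
\[
|\mathcal E|X^2\ll\int_\mm|S(\alpha)|^4\dd\alpha\le\sup_{\mm}|S|^2\int_0^1|S|^2\ll X\log X\sup_\mm|S|^2 .
\]
It therefore suffices to have \(\sup_{\alpha\in\mm}|S(\alpha)|\ll X^{19/24+\varepsilon}\), since then \(|\mathcal E|\ll X^{2\cdot19/24-1+2\varepsilon}=X^{7/12+2\varepsilon}\). This is where we would invoke the Maynard--Pandey--Radziwi{\l}{\l} estimate. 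Vinogradov/Vaughan alone gives \(S(\alpha)\ll(Xq^{-1/2}+X^{4/5}+X^{1/2}q^{1/2})X^{\varepsilon}\) for \(|\alpha-a/q|\le q^{-2}\), and this is the source of \(3/5\). Their result improves \(X^{4/5}\) to \(X^{19/24}\), presumably in the same shape with modified \(q\)-dependent terms.

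\textbf{Main obstacle.} The difficulty is compatibility between the two arc systems. The improved pointwise bound \(X^{19/24+\varepsilon}\) must hold throughout \(\mm\), that is, for all \(\alpha\) whose Dirichlet approximation has \(q>Q\) or \(|q\alpha-a|>Q/X\). For the minor-arc term \(Xq^{-1/2}\) to drop below \(X^{19/24}\) one needs \(q\ge X^{5/12}\). So \(Q=X^{\delta}\) is far too small unless the MPR bound handles moderate \(q\) directly. If it does not, we would enlarge the major arcs to \(Q=X^{5/12+\varepsilon}\). This is exactly where Pintz's explicit formula, valid for large major arcs, is needed. The zero contribution must then be re-estimated with a zero-density bound: \(\sum_{q\le Q}\sum_\chi N(\sigma,T,\chi)\ll(Q^2T)^{c(1-\sigma)}\), combined with \(\Rea\rho\le\theta_0\), which makes \(X^{\rho-1}\) carry a saving \(X^{-(1-\theta_0)}\). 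Our first attempt would be a log-free density estimate, summing \(X^{\sigma-1}(Q^2T)^{c(1-\sigma)}\) over \(\sigma\le\theta_0\). This is summable with a power saving once \(\varepsilon\) is small relative to \(1-\theta_0\). We would accept \(\ll_{\theta_0,\varepsilon}\) dependence, and verify that the \(L^2\) treatment of the enlarged major arcs (averaging over \(N\)) keeps the exceptional count at \(X^{7/12+\varepsilon}\).
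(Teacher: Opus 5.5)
Your architecture matches the paper's: large major arcs via Pintz's explicit formula, the Maynard--Pandey--Radziwi{\l}{\l} bound on the minor arcs, then Bessel and Parseval. However, the step you flag as the main obstacle is handled by an argument that fails.

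First, the Maynard--Pandey--Radziwi{\l}{\l} bound has the shape $Y^{\eta}(Y/B_Y^{1/2}+Y^{19/24})$ with $B_Y=\max(q,qY|\beta|)$, so it gives nothing new for moderate $q$. Your arcs with $Q=X^{\delta}$ must therefore be abandoned, and one needs $P\geq X^{5/12}$. On $\mm(P)$ one has $B_Y>PY/X$ for every $Y\leq X$, whence $\sup_{\mm}|S|\ll X^{\eta}(X/\sqrt P+X^{19/24})$.

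Second, your re-estimate of the zeros on these large arcs does not work. With $Q\geq X^{5/12}$ you have $Q^2T\geq X^{5/6}$. Any family density bound $(Q^2T)^{c(1-\sigma)}$ valid near $\sigma=1/2$ has $c\geq2$, because the family has $\gg Q^2T$ zeros with $\Rea\rho\geq1/2$. Thus $X^{\sigma-1}(Q^2T)^{c(1-\sigma)}\geq X^{2(1-\sigma)/3}$, which is a power \emph{loss} reaching $X^{1/3}$ at $\sigma=1/2$. The restriction $\sigma\leq\theta_0$ cannot repair this, and taking $\varepsilon$ small relative to $1-\theta_0$ is irrelevant. A fixed half-plane such as $\Rea s\leq7/8$ is far too weak to beat a power-size zero count by the triangle inequality.

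The missing idea is to use Pintz's formula exactly as stated, rather than re-deriving its treatment of the zeros. Take any $P\leq X^{4/9-\delta}$, with the admissible choice $P_0X^{-\delta}\leq P\leq P_0$. If no primitive $L$-function of conductor at most $P$ vanishes in $\Rea s\geq1-H/\log X$, $|\Ima s|\leq\sqrt X$, then $R_{\MM}(n)=\SSS(n)n+O(\SSS(n)Xe^{-c_\delta H})+O(X^{1-\kappa})$ uniformly for even $n\in(X/2,X]$. The control of all zeros outside that thin region is built into the formula. The hypothesis $\HH_{\theta_0}$ is used only to empty this region, which it does once $\log X>H/(1-\theta_0)$. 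This gives $R_{\MM}(n)\geq cX$ for \emph{every} even $n$ in the range, so no averaging over $n$ on the major arcs is needed.

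You must also check that the formula's range covers the arcs the minor-arc bound requires. With $0<\delta<1/72$ and $X^{4/9-2\delta}\leq P\leq X^{4/9-\delta}$, one gets $X/P\leq X^{5/9+2\delta}<X^{7/12}$. The inequality $5/12<4/9$ is precisely what closes the argument, and your proposal never verifies it.
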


\begin{corollary}[Application of the \(7/8\) theorem]\label{cor:seven}
On invoking the uniform Dirichlet \(L\)-function theorem
of OpenAI \cite[Theorem~1.1]{OAI}, one obtains
\[
 E(X)\ll_\varepsilon X^{7/12+\varepsilon}.
\]
\end{corollary}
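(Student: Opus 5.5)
The corollary follows from Theorem~\ref{thm:fixed} once the hypothesis \(\HH_{\theta_0}\) is checked for some fixed \(\theta_0<1\), and the plan is to take that hypothesis directly from \cite[Theorem~1.1]{OAI}. As reported in the abstract, that result asserts that every nontrivial zero \(\rho\) of every Dirichlet \(L\)-function satisfies \(\Rea\rho\le 7/8\), uniformly in the character and in the height. So I would set \(\theta_0=7/8\) and then verify three points.

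First, the class of functions must match. \(\HH_\theta\) in \eqref{eq:hypothesis} is stated for primitive characters, with \(\zeta\) included as conductor one. If \cite[Theorem~1.1]{OAI} is stated for all Dirichlet characters, restricting to primitive ones is immediate. If it is stated only for primitive characters, that is exactly what is needed. The zeros of an \(L(s,\chi)\) with imprimitive \(\chi\) differ from those of the inducing primitive \(L\)-function only by zeros of finitely many Euler factors \(1-\chi^*(p)p^{-s}\), which lie on \(\Rea s=0\). So nothing changes in either case.

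Second, the constant must be absolute. The bound \(7/8\) has to be uniform, with no exceptional moduli, no height cutoff, and no dependence on an ineffective constant. Otherwise the implied constant in \eqref{eq:seven} would not be a function of \(\theta_0\) and \(\varepsilon\) alone.

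Third, the range condition. Since \(1/2\le 7/8<1\), \(\theta_0=7/8\) lies in the range allowed by Theorem~\ref{thm:fixed}.

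With these checks done, Theorem~\ref{thm:fixed} gives \(E(X)\ll_{7/8,\varepsilon}X^{7/12+\varepsilon}\). Because \(\theta_0\) is now an absolute number, the dependence on \(\theta_0\) disappears from the implied constant, leaving \(E(X)\ll_\varepsilon X^{7/12+\varepsilon}\), which is the claim.

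The main obstacle is purely one of matching hypotheses, namely the uniformity check in the second point. The argument depends on \cite{OAI} genuinely supplying a zero-free half-plane valid for every conductor and every height, with an absolute constant. All the analytic work, including the major-arc explicit formula and the minor-arc estimate from \cite{MPR}, is contained in Theorem~\ref{thm:fixed}.
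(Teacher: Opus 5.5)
Your proposal is correct and matches the paper's proof, which simply takes \(\HH_{7/8}\) from \cite[Theorem~1.1]{OAI} and applies Theorem~\ref{thm:fixed} with \(\theta_0=7/8\). Your checks on primitive versus imprimitive characters, uniformity in conductor and height, and the range \(1/2\le\theta_0<1\) make explicit the hypothesis-matching that the paper leaves implicit. The paper also notes that any fixed boundary below one would suffice, so the specific value \(7/8\) does not affect the exponent.
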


Theorem~\ref{thm:fixed} isolates the exact implication needed from
the external zero-free theorem. A result for \(\zeta(s)\) alone
would not supply its hypothesis. This draft takes the uniform
Dirichlet statement in \cite{OAI} as an external input; it does not
reprove that result. In fact, any fixed boundary below one,
including \(11/12\), suffices for this particular application.
The value \(7/8\) affects the required lower threshold for \(X\),
but not the exponent in \eqref{eq:seven}.
The \(\varepsilon\)-loss is retained throughout: we do not assert
\(E(X)\ll X^{7/12}\) without it.

The next result addresses stronger fixed zero-free hypotheses.

\begin{theorem}[Conditional refinement]\label{thm:main}
Assume \(\HH_\theta\) for a fixed \(1/2\leq\theta\leq2/3\).
For every \(\varepsilon>0\),
\begin{equation}\label{eq:main}
 E(X)\ll_{\theta,\varepsilon}X^{g(\theta)+\varepsilon},
 \qquad
 g(\theta)=2\theta-\frac32+\frac{3(1-\theta)}{2-\theta}
 =\frac{\theta(5-4\theta)}{2(2-\theta)}.
\end{equation}
\end{theorem}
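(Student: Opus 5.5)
We follow the circle method with major arcs of level \(P=X^{\beta}\) and show that \(g(\theta)\) is the balance point between the major-arc and minor-arc errors. Write \(S(\alpha)=\sum_{n\le X}\Lambda(n)e(n\alpha)\). Let \(\MM\) be the union of arcs \(|\alpha-a/q|\le 1/(qQ)\) with \(q\le P\), where \(Q=X/P\), and let \(\mm\) be the complement. For even \(n\in(X/2,X]\) put \(R(n)=\int_0^1 S(\alpha)^2e(-n\alpha)\dd\alpha\). It suffices to show that, outside a set of \(\ll X^{g(\theta)+\varepsilon}\) such \(n\),
\[
 R(n)=\SSS(n)n+O\bigl(n^{1-\delta}\bigr),
\]
and then sum over dyadic ranges. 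Because \(\SSS(n)\gg1\) for even \(n\), any \(n\) in the exceptional set must carry a large error from either \(\MM\) or \(\mm\).

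\emph{Minor arcs.} On \(\mm\) we use the classical Vaughan-type bound \(\sup_{\mm}|S|\ll X^{1+\varepsilon}(P^{-1/2}+X^{-1/2}P^{1/2}+X^{-1/5})\). Combined with Bessel's inequality \(\int|S|^2\ll X\log X\), this shows that the number of \(n\) with \(|\int_{\mm}S^2e(-n\alpha)|\ge X^{1-\delta}\) is
\[
 \ll X^{2\delta+\varepsilon}\sup_{\mm}|S|^2X^{-1}\ll X^{1+\varepsilon}P^{-1}
\]
for \(P\le X^{2/5}\). This piece does not use \(\HH_\theta\).

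\emph{Major arcs.} Here we expand \(S(a/q+\eta)\) through the explicit formula, which is Pintz's formula from \cite{P1} in the form already used for Theorem~\ref{thm:fixed}. After the main term \(\mu(q)\phi(q)^{-1}T(\eta)\), the error is a sum over characters \(\chi\) mod \(q\) and zeros \(\rho\) of \(L(s,\chi)\) with \(|\Ima\rho|\le T\approx qQ\), each contributing \(\approx x^{\rho}\) weighted by \(\tau(\bar\chi)/\phi(q)\). The main term of \(\int_{\MM}\) gives \(\SSS(n)n\), with a truncation error controlled by \(P\). Cross terms and squares of zero terms we treat in mean square over \(n\), again by Bessel/Parseval, reducing to
\[
 \sum_{q\le P}\ \sum_{\chi\bmod q}\ \sum_{\rho}X^{2\Rea\rho}.
\]
This is where \(\HH_\theta\) enters. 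We bound \(X^{\Rea\rho}\le X^{\theta}\) and count the zeros with the classical fixed-modulus zero-density estimate \(N(\sigma,T,\chi)\ll (qT)^{3(1-\sigma)/(2-\sigma)+\varepsilon}\), the Ingham/Montgomery-type form, evaluated at \(\sigma=\theta\). Summing over \(q\le P\) and \(\chi\) then yields, in the mean square, an exceptional count of the shape
\[
 X^{2\theta-1+\varepsilon}\,P^{A(\theta)},
\]
with \(A(\theta)\) explicitly linear in \(3(1-\theta)/(2-\theta)\) (taking \(T\approx X\) on the relevant arcs).

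\emph{Optimization.} Finally we choose \(\beta\) to balance the two counts, \(X P^{-1}\) against \(X^{2\theta-1}P^{A}\). The identity \(g(\theta)=2\theta-\tfrac32+\tfrac{3(1-\theta)}{2-\theta}\) is what we aim to recover. The term \(2\theta-\tfrac32\) suggests that the minor-arc side should be \(X^{1/2}\)-type after balancing, and that the zero side contributes \(X^{2\theta-2}\cdot X^{1/2}\cdot X^{3(1-\theta)/(2-\theta)}\), i.e. density at height \(\approx X\) with \(P=X^{1/2}\)-scale moduli. We must also check that \(\beta\le 2/5\) or the corresponding minor-arc constraint holds exactly when \(\theta\le2/3\), and at \(\theta=2/3\) we should get \(g=7/12\).

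\emph{Main obstacle.} The hardest step is the major-arc mean square. We have to get the zero sum with exactly the exponent \(3(1-\theta)/(2-\theta)\) uniformly over \(q\le P\), without losing an extra factor of \(P\) from summing characters trivially. We would try first to use the Gauss-sum weight \(|\tau(\chi)|^2/\phi(q)^2\approx 1/q\), together with orthogonality in \(n\), so that the sum over moduli costs only \(P^{1+\varepsilon}\) times a single-modulus density bound. If that is too lossy, we would instead use the hybrid large-sieve density estimate \(\sum_{q\le P}\sum_\chi N(\sigma,T,\chi)\ll (P^2T)^{3(1-\sigma)/(2-\sigma)+\varepsilon}\) and re-optimize.
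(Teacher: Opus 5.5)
Your proof has a genuine gap: the minor-arc step fails, and no choice of $P$ can repair it. Your Vaughan-type bound contains the term $X^{4/5+\varepsilon}$ independently of $P$. Your own count $X^{1+\varepsilon}P^{-1}$ with $P\le X^{2/5}$ is never smaller than $X^{3/5}$. So this route gives at best $E(X)\ll X^{3/5+\varepsilon}$, while $g(\theta)\le g(2/3)=7/12<3/5$ on all of $[1/2,2/3]$; and $g(1/2)=1/2$ would even need $P\ge X^{1/2}$. The check you postpone to the end in fact fails: $X/P\le X^{7/12}$ forces $P\ge X^{5/12}>X^{2/5}$. Even the Maynard--Pandey--Radziwi{\l}{\l} bound only reaches $2\cdot\frac{19}{24}-1=\frac7{12}$, which is Theorem~\ref{thm:fixed}. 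A minor-arc estimate that ``does not use $\HH_\theta$'' is therefore the wrong tool here: the hypothesis has to control the exponential sum on the whole circle.

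The missing idea is the Hardy--Littlewood GRH argument with a zero-density estimate inserted, applied at every point of the circle. Every $\alpha$ has a Dirichlet approximation $a/q+\beta$ with $q\le\sqrt X$ and $|\beta|\le1/(q\sqrt X)$, so $B=q(1+X|\beta|)\ll\sqrt X$ everywhere. The paper uses the smoothed sum $F_X(\alpha)=\sum_m\Lambda(m)e^{-m/X}e(m\alpha)$. Its zero terms are $\varphi(q)^{-1}\tau(\overline\chi)\chi(a)\Gamma(\rho)z^{-\rho}$ with $z=X^{-1}-2\pi i\beta$. For $\Rea\rho\ge1/2$, Stirling gives $|\Gamma(\rho)z^{-\rho}|\ll X^{\Rea\rho}T^{-1/2}(1+|\gamma|/T)^{1/6}e^{-c|\gamma|/T}$ with $T=1+X|\beta|$. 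So only zeros of height $\lesssim T$ matter, not height $X$. Combining the Gauss-sum factor $q^{-1/2}$, this $T^{-1/2}$, and the single-modulus bound $N_q(\sigma,U)\ll(qU)^{D(\sigma)+o(1)}$ at $U\asymp T$ gives the pointwise error $X^{\sigma}B^{D(\sigma)-1/2}\le X^{a(\theta)+o(1)}$, where $a(\theta)=\theta-\frac14+\frac12D(\theta)$. This uses $D(\sigma)-\frac12\ge\frac14$, $B\ll\sqrt X$, and the fact that $\sigma-\frac14+\frac12D(\sigma)$ increases in $\sigma$.

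No sum over moduli $q\le P$ ever arises. Your worry about losing $P^{1+\varepsilon}$, and the hybrid $(P^2T)^{D}$ bound, are therefore beside the point. Likewise, your reduction of the major-arc mean square to $\sum_q\sum_\chi\sum_\rho X^{2\Rea\rho}$ drops the $q^{-1/2}T^{-1/2}$ weights and is not justified. The paper then takes a major-arc model $V_X$ at level $X^{1/3}$; off these arcs the rational term is $\ll X/B\ll X^{2/3}$. This gives $\|F_X-V_X\|_\infty\ll X^{a(\theta)+\eta}$. Bessel applied with $\|F_X^2-V_X^2\|_2\le\|F_X-V_X\|_\infty\|F_X+V_X\|_2$ then yields $E(X)-E(X/2)\ll X^{2a(\theta)-1+\varepsilon}=X^{g(\theta)+\varepsilon}$.
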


Since \(g(2/3)=7/12\) and \(g\) is strictly increasing on
\([1/2,2/3]\), every fixed \(\theta<2/3\) gives a fixed positive
saving below \(7/12\), after choosing \(\varepsilon\) small enough.
For example,
\[
 \HH_{5/8}\quad\Longrightarrow\quad
 E(X)\ll_\varepsilon X^{25/44+\varepsilon},
 \qquad
 \frac7{12}-\frac{25}{44}=\frac1{66}.
\]
This is a sufficient threshold for the method, not a claim that
\(\theta<2/3\) is necessary or optimal.

\section{Major and minor arcs}

Write \(e(t)=\exp(2\pi i t)\).
Fix a small constant \(\delta>0\), and choose a sufficiently small
constant \(\varepsilon_0>0\) as in Pintz's major-arc formula.
Put
\[
 X_1=X^{1-\varepsilon_0},
 \qquad
 S_X(\alpha)=\sum_{X_1<p\leq X}(\log p)e(p\alpha).
\]
For \(P\leq X^{4/9-\delta}\), define, on \(\mathbb R/\mathbb Z\),
\[
 \MM(P)=
 \bigcup_{q\leq P}
 \ \bigcup_{\substack{0\leq a<q\\(a,q)=1}}
 \left\{\alpha:\left|\alpha-\frac aq\right|
                    \leq\frac{P}{qX}\right\},
 \qquad
 \mm(P)=(\mathbb R/\mathbb Z)\setminus\MM(P).
\]
Distances and rational approximations on the circle are understood
modulo one. These arcs are disjoint for sufficiently large \(X\).
For \(n\in(X/2,X]\), write
\[
 R_X(n)=\sum_{\substack{p+q=n\\p,q>X_1}}\log p\log q
       =R_{\MM}(n)+R_{\mm}(n),
 \qquad
 R_{\mathcal A}(n)=\int_{\mathcal A}S_X(\alpha)^2e(-n\alpha)\dd\alpha.
\]
We use the usual Goldbach singular series
\[
 \SSS(n)=2C_2\prod_{\substack{p\mid n\\p>2}}\frac{p-1}{p-2},
 \qquad
 C_2=\prod_{p>2}\left(1-\frac1{(p-1)^2}\right)>0
 \quad (n\text{ even}).
\]

\subsection{The specialization of Pintz's explicit formula}

The following is the zero-free specialization of
\cite[Theorem~1 and equation~(2.10), arXiv version]{P1}.
For each \(P_0\leq X^{4/9-\delta}\), the formula permits a choice
\begin{equation}\label{eq:Pchoice}
 P_0X^{-\delta}\leq P\leq P_0.
\end{equation}
For fixed sufficiently large \(H\), if no primitive Dirichlet
\(L\)-function of conductor at most \(P\) has a zero in
\[
 \Rea s\geq1-\frac{H}{\log X},\qquad
 |\Ima s|\leq\sqrt X,
\]
then, uniformly for even \(n\in(X/2,X]\),
\begin{equation}\label{eq:Pintz}
 R_{\MM}(n)=\SSS(n)n+
 O_\delta\!\left(\SSS(n)Xe^{-c_\delta H}\right)
 +O_\delta(X^{1-\kappa}),
 \qquad c_\delta>0,\quad \kappa>0.
\end{equation}
Indeed, the generalized-singularity sum in that formula then contains
only the pole of \(\zeta(s)\) at one. The term from the pair of poles
is \(\SSS(n)n\). Here \(\kappa\) is a fixed positive constant;
we allow it to be smaller than \(\varepsilon_0\) to absorb any
logarithmic factors associated with the lower prime cutoff.

Under \(\HH_{\theta_0}\), the zero condition holds as soon as
\begin{equation}\label{eq:Hchoice}
 \log X>\frac{H}{1-\theta_0}.
\end{equation}
Choose \(\delta\) first, then \(H\) sufficiently large, and finally
\(X\) sufficiently large. The integral \(R_{\MM}(n)\) is real by
symmetry. Since \(\SSS(n)\geq2C_2\), equation
\eqref{eq:Pintz} yields
\begin{equation}\label{eq:majorpositive}
 R_{\MM}(n)\geq cX
 \quad (X/2<n\leq X,\ n\text{ even})
\end{equation}
for a fixed \(c>0\).
This excludes all the generalized exceptional zeros relevant to the
formula, not only a possible real Siegel zero.

\subsection{The exponential-sum estimate and the cutoff}

We use \cite[Theorem~1.1]{MPR} in the following form. If
\[
 \alpha=\frac aq+\beta,\qquad (a,q)=1,\qquad
 q\leq Y^{1/2},\qquad |\beta|\leq\frac1{qY^{1/2}},
 \qquad B_Y=\max(q,qY|\beta|),
\]
then, for every \(\eta>0\),
\begin{equation}\label{eq:MPR}
 \left|\sum_{m\leq Y}\Lambda(m)e(m\alpha)\right|
 \ll_\eta Y^\eta\left(\frac{Y}{B_Y^{1/2}}+Y^{19/24}\right).
\end{equation}
Changing a strict upper endpoint to a non-strict one contributes
only \(O(\log(2Y))\).

We record explicitly why \eqref{eq:MPR} also controls the lower
cutoff in \(S_X\). Fix \(\alpha\in\mm(P)\) and \(2\leq Y\leq X\),
and choose a Dirichlet approximation at scale \(Y\) as above.
If \(q>P\), then \(B_Y>P\geq PY/X\).
If \(q\leq P\), the fact that \(\alpha\notin\MM(P)\) implies
\(|\beta|>P/(qX)\), so again \(B_Y>PY/X\).
Consequently
\[
 \frac{Y}{\sqrt{B_Y}}\ll\sqrt{\frac{XY}{P}}
                   \leq\frac{X}{\sqrt P}.
\]
Applying \eqref{eq:MPR} at \(Y=X\) and \(Y=X_1\), and deleting
the proper prime powers, whose total von Mangoldt weight is
\(O(\sqrt X(\log X)^2)\), gives
\begin{equation}\label{eq:minor}
 \sup_{\alpha\in\mm(P)}|S_X(\alpha)|
 \ll_\eta X^\eta
 \left(\frac{X}{\sqrt P}+X^{19/24}\right).
\end{equation}
In particular, no trivial estimate of size \(X_1\) is incurred.

\section{Proof of Theorem~\ref{thm:fixed}}

Choose a fixed \(0<\delta<1/72\), put \(P_0=X^{4/9-\delta}\),
and use \eqref{eq:Pchoice}. Thus
\begin{equation}\label{eq:Psize}
 X^{4/9-2\delta}\leq P\leq X^{4/9-\delta}.
\end{equation}
Under \(\HH_{\theta_0}\), the choices described in
\eqref{eq:Hchoice} give \eqref{eq:majorpositive}.

If an even \(n\in(X/2,X]\) has no Goldbach representation, then
\(R_X(n)=0\). Hence \(|R_{\mm}(n)|=|R_{\MM}(n)|\geq cX\).
Bessel's inequality, followed by \eqref{eq:minor} and Parseval,
therefore gives
\begin{align}
 \bigl(E(X)-E(X/2)\bigr)X^2
 &\ll \int_{\mm(P)}|S_X(\alpha)|^4\dd\alpha \notag\\
 &\leq\left(\sup_{\alpha\in\mm(P)}|S_X(\alpha)|\right)^2
       \int_0^1|S_X(\alpha)|^2\dd\alpha \notag\\
 &\ll_\eta X^{2\eta}
       \left(\frac{X^2}{P}+X^{19/12}\right)X\log X.
       \label{eq:baselineBessel}
\end{align}
Here
\(\int_0^1|S_X|^2=\sum_{X_1<p\leq X}(\log p)^2\ll X\log X\).
It follows that
\begin{equation}\label{eq:baselinecount}
 E(X)-E(X/2)
 \ll_\eta X^{2\eta}\log X
       \left(\frac XP+X^{7/12}\right).
\end{equation}
By \eqref{eq:Psize},
\[
 \frac XP\leq X^{5/9+2\delta},
 \qquad
 \frac59+2\delta<\frac7{12}.
\]
Given \(\varepsilon>0\), take \(\eta>0\) sufficiently small and
sum \eqref{eq:baselinecount} over dyadic intervals.
This proves \eqref{eq:seven}. Corollary~\ref{cor:seven} follows
by setting \(\theta_0=7/8\) in the external input
\cite[Theorem~1.1]{OAI}.

This proof explains why improving the numerical boundary \(7/8\)
slightly does not itself improve the exponent \(7/12\).
Once the major-arc main term is positive, the displayed minor-arc
estimate is the limiting input. The next section uses stronger
zero-free information directly in the exponential sum.

\section{Sharper bounds under a stronger zero-free hypothesis}
\label{sec:conditional}

Throughout this section, assume \(\HH_\theta\) with
\(1/2\leq\theta\leq2/3\). We give a complete proof of
Theorem~\ref{thm:main}. Its analytic input is classical and does
not require \cite{MPR} or \cite{OAI}.

\subsection{The classical zero-density input}

For a character \(\chi\bmod q\), let \(\chi^*\) be its inducing
primitive character. Count zeros with multiplicity, and put
\[
 N_q(\sigma,U)=
 \sum_{\chi\bmod q}
 \#\{\rho=\beta+i\gamma:\ L(\rho,\chi^*)=0,
 \ 0<\beta<1,\ \beta\geq\sigma,\ |\gamma|\leq U\}.
\]
We use the classical fixed-modulus estimate
\begin{equation}\label{eq:density}
 N_q(\sigma,U)\ll
 (qU)^{D(\sigma)}\log^9(qU),
 \qquad D(\sigma)=\frac{3(1-\sigma)}{2-\sigma},
 \qquad \frac12\leq\sigma\leq\frac34,
\end{equation}
uniformly for \(q\geq1\) and \(U\geq2\).
This is explicitly recorded in Wolke \cite[eq.~(2.2.5)]{Wolke},
with a reference to Montgomery \cite[Theorem~12.1]{Montgomery}.
It sums over characters of one fixed modulus; the parameter is
\(qU\). Euler factors introduced by imprimitive characters have
zeros on \(\Rea s=0\), so they do not affect this statement.
We also use the elementary total zero count
\begin{equation}\label{eq:total}
 \sum_{\chi\bmod q}
 \#\{\rho_{\chi^*}:|\Ima\rho_{\chi^*}|\leq U\}
 \ll q(U+1)\log\bigl(q(U+2)\bigr).
\end{equation}

By the functional equations, \(\HH_\theta\) implies
\begin{equation}\label{eq:strip}
 1-\theta\leq\Rea\rho\leq\theta
 \quad\text{for all these nontrivial zeros.}
\end{equation}
The lower bound follows by applying the hypothesis also to the
conjugate character. In our range, all nontrivial zeros therefore
stay in the fixed strip \(1/3\leq\Rea\rho\leq2/3\).

\subsection{A uniform estimate near rational points}

Write \(e(t)=\exp(2\pi i t)\), and introduce
\[
 F_X(\alpha)=\sum_{m\geq1}\Lambda(m)e^{-m/X}e(m\alpha).
\]
Exponential smoothing is convenient because its Mellin transform
is a gamma factor, and its convolution gives an exact main term.
Set
\begin{equation}\label{eq:a}
 a(\theta)=\theta-\frac14+\frac{3(1-\theta)}{2(2-\theta)}.
\end{equation}

\begin{lemma}\label{lem:local}
Assume \(\HH_\theta\), with \(1/2\leq\theta\leq2/3\).
For every \(\eta>0\), uniformly when
\[
 (a,q)=1,\qquad 1\leq q\leq2\sqrt X,
 \qquad |\beta|\leq\frac{2}{q\sqrt X},
\]
one has, with \(z=X^{-1}-2\pi i\beta\),
\begin{equation}\label{eq:local}
 F_X(a/q+\beta)=\frac{\mu(q)}{\varphi(q)z}
 +O_{\theta,\eta}\bigl(X^{a(\theta)+\eta}\bigr).
\end{equation}
\end{lemma}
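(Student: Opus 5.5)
Decompose \(F_X(a/q+\beta)\) by Dirichlet characters modulo \(q\), and evaluate each character sum with a Mellin contour shift whose zero sum is controlled by \eqref{eq:density} and \eqref{eq:total}.

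\textbf{Reduction to character sums.} Write \(m\) coprime to \(q\) via characters. Terms with \((m,q)>1\) are prime powers \(p^k\) with \(p\mid q\), contributing \(O(\log^2(qX))\). Gauss sums give
\[
F_X(a/q+\beta)=\frac1{\varphi(q)}\sum_{\chi\bmod q}\bar\chi(a)\tau(\chi)\sum_m\Lambda(m)\chi(m)e^{-mz}+O(\log^2 X).
\]
Replacing \(\chi\) by its primitive inducing \(\chi^*\) costs another \(O(\log^2 X)\) per character, hence \(O(\log^2X)\) after using \(|\tau(\chi)|\le\sqrt q\) and averaging. Then Mellin inversion gives
\[
\sum_m\Lambda(m)\chi^*(m)e^{-mz}=\frac1{2\pi i}\int_{(2)}-\frac{L'}{L}(s,\chi^*)\Gamma(s)z^{-s}\dd s.
\]
Shift the contour to \(\Rea s=-1/2\). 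The pole at \(s=1\), present only for \(\chi=\chi_0\), gives \(1/z\), and \(\tau(\chi_0)=\mu(q)\) yields the main term \(\mu(q)/(\varphi(q)z)\). The nontrivial zeros give \(-\sum_\rho\Gamma(\rho)z^{-\rho}\). The remaining poles at \(s=0\) and the shifted integral are \(O(\log^2(qX))\), since \(|z^{-s}|\) is bounded there. Note \(|z|\asymp X^{-1}+|\beta|\) and \(|\arg z|\le\pi/2\), so \(|\Gamma(\rho)z^{-\rho}|\ll|z|^{-\beta_\rho}\exp(-(\pi/2-|\arg z|)|\gamma|)|\gamma|^{\beta_\rho-1/2}\).

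\textbf{Bounding the zero sum.} Here is the main work. Let \(\Delta=\pi/2-|\arg z|\asymp (X|\beta|)^{-1}\) when \(X|\beta|\ge1\), so zeros with \(|\gamma|\gg U:=(1+X|\beta|)X^{\eta}\) are negligible. The error is then
\[
\frac{\sqrt q}{\varphi(q)}\sum_{\chi}\sum_{|\gamma|\le U}|z|^{-\beta_\rho}|\gamma|^{\beta_\rho-1/2}.
\]
Since \(|z|^{-1}\le X/(1+X|\beta|)\), the factor \(|z|^{-\beta}|\gamma|^{\beta-1/2}\) is at most \(X^{\beta}U^{-1/2}\), up to \(X^\eta\). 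Split \(\beta_\rho\) dyadically in \([1-\theta,\theta]\), using \eqref{eq:strip}. By symmetry under \(\rho\mapsto1-\bar\rho\) one may pair zeros, but it is simpler to bound directly. For \(\beta\ge\sigma\ge1/2\), use \eqref{eq:density}, which counts \((qU)^{D(\sigma)}\) zeros. For \(\beta<1/2\), use \eqref{eq:total}, which counts \(qU\) zeros with weight \(X^{1/2}U^{-1/2}\). Collecting terms gives
\[
\ll X^{\eta}q^{-1/2}\Bigl(\max_{1/2\le\sigma\le\theta}X^{\sigma}U^{-1/2}(qU)^{D(\sigma)}+X^{1/2}U^{1/2}q\Bigr).
\]
With \(qU\le q(1+X|\beta|)X^\eta\ll \sqrt X X^{\eta}\) (from the hypotheses \(q\le2\sqrt X\), \(q|\beta|\le2/\sqrt X\)), the second term is \(\ll X^{1/2}(qU)^{1/2}\ll X^{3/4+\eta}\). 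This looks too large. So instead the low zeros should be handled using the reflection: \(|z|^{-\beta}\) with \(\beta\le1/2\) is at most \(|z|^{-1/2}\), and the total count must be paid. I expect the main obstacle to be arranging this so the final exponent is \(a(\theta)=\theta-\frac14+\frac12D(\theta)\).

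\textbf{Hoped-for accounting.} That shape matches the estimate
\[
X^{\theta}\,q^{-1/2}U^{-1/2}(qU)^{D(\theta)}\le X^{\theta}(qU)^{D(\theta)-1/2},
\]
evaluated at \(qU\approx\sqrt X\), which gives \(X^{\theta-1/4+D(\theta)/2}\). So I will treat all zeros uniformly by monotonicity in \(\sigma\). Since \(D\) is decreasing and \(X^\sigma(\sqrt X)^{D(\sigma)}\) increases in \(\sigma\) on \([1/2,2/3]\) (its derivative \(1+\tfrac12D'(\sigma)\log\)-scaled is positive, as \(|D'|\le 3\cdot\)const), the worst case is \(\sigma=\theta\). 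For zeros left of \(1/2\), I will use the functional-equation symmetry: the set of \(\chi\)-zeros is symmetric with \(\bar\chi\), so counting zeros with \(\beta\ge1-\beta'\) again uses \eqref{eq:density} at \(\sigma=1/2\). Their weight \(X^{\beta}\le X^{1/2}\) yields at most \(X^{1/2}(qU)^{D(1/2)-1/2}\ll X^{1/2+1/4}\)? That bound is not good enough. I would refine it by using the sharper weight \(|z|^{-\beta}\) with the factor \(q^{-1/2}U^{-1/2}\) kept separately, noting \(D(1/2)=1\). This is the delicate point. If necessary, I would exploit that \(|z|^{-1}\le X\,(qU)^{-1}\cdot q\) to trade powers of \(X\) for powers of \(qU\), optimizing at the extreme \(qU\asymp\sqrt X\), where the exponent equals \(a(\theta)\).
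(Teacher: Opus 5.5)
Your route is the paper's: Gauss-sum expansion over characters modulo \(q\), the contour shift of \(-\frac{L'}{L}(s,\chi^*)\Gamma(s)z^{-s}\) to \(\Rea s=-1/2\), and Stirling with decay \(e^{-c|\gamma|/T}\), \(T=1+X|\beta|\), to truncate the zeros at height about \(T\). You then use the total count \eqref{eq:total} with weight \(X^{1/2}T^{-1/2}\) for \(\Rea\rho<1/2\), and \eqref{eq:density} for \(\Rea\rho\ge 1/2\). Your collected bound \(X^{\eta}q^{-1/2}\bigl(\max_{\sigma}X^{\sigma}U^{-1/2}(qU)^{D(\sigma)}+X^{1/2}U^{1/2}q\bigr)\) is \eqref{eq:zerobound} with \(B=qT\), up to \(X^{O(\eta)}\).

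The gap is that you do not conclude. You declare the low-zero term \(X^{1/2}(qU)^{1/2}\ll X^{3/4+\eta}\) ``too large'' and end with unexecuted ideas: pairing \(\rho\mapsto 1-\bar\rho\), and trading powers of \(X\) for powers of \(qU\). The missing observation is that the target exponent never drops below \(3/4\). With \(h(\sigma)=\sigma-\frac14+\frac12D(\sigma)\) one has \(a(\theta)=h(\theta)\) and \(h(1/2)=\frac14+\frac12=\frac34\). Moreover \(h'(\sigma)=1-\frac{3}{2(2-\sigma)^2}\ge\frac5{32}\) on \([1/2,2/3]\). Hence \(X^{3/4+\eta}\le X^{a(\theta)+\eta}\), which is exactly how the paper finishes: ``the first term is \(O(X^{3/4})\), and \(a(\theta)\geq a(1/2)=3/4\)''. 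In fact the term you reject coincides with the \(\sigma=1/2\) entry of your own maximum, since \(D(1/2)=1\). So it is never the bottleneck, and no treatment of the low zeros could push this method below \(X^{3/4}\) anyway.

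For the high zeros, write \(q^{-1/2}X^{\sigma}U^{-1/2}(qU)^{D(\sigma)}=X^{\sigma}(qU)^{D(\sigma)-1/2}\). Then use \(D(\sigma)-\frac12\ge\frac14>0\) and \(qU\ll X^{1/2+\eta}\) to get \(X^{h(\sigma)+O(\eta)}\). Justify the monotonicity of \(h\) by the explicit bound on \(h'\) above, not by ``\(|D'|\le 3\cdot\)const''.

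Smaller repairs:
\begin{itemize}
\item Cut the \(\Rea\rho\)-range into pieces of length at most \(1/\log X\), not dyadically. Otherwise replacing \(X^{\Rea\rho}\) by \(X^{\sigma}\) loses a fixed power of \(X\).
\item The expansion is \(e(am/q)=\varphi(q)^{-1}\sum_\chi\bar\chi(am)\tau(\chi)\), with \(\bar\chi(m)\) rather than \(\chi(m)\).
\item Passing from \(\chi\) to \(\chi^*\) costs \(O(\sqrt q\log^2(qX))\), not \(O(\log^2X)\). This is harmless, since \(\sqrt q\ll X^{1/4}\).
\item On \(\Rea s=-1/2\) the factor \(|z^{-s}|=|z|^{1/2}e^{\Ima s\cdot\arg z}\) is not bounded. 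It is the \(e^{-\pi|\Ima s|/2}\) in \(\Gamma(s)\) that gives net decay \(e^{-(\pi/2-|\arg z|)|\Ima s|}\), and hence an \(O(\log^2(qX))\) bound for the shifted integral.
\end{itemize}
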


\begin{proof}
Put
\[
 T=1+X|\beta|,\qquad B=qT.
\]
Then \(B\leq4\sqrt X\), \(|z|\asymp T/X\), and
\(\pi/2-|\arg z|\gg1/T\).

\paragraph{Character expansion and the explicit formula.}
For \((m,q)=1\), character orthogonality gives
\[
 e(am/q)=\frac{1}{\varphi(q)}
 \sum_{\chi\bmod q}\tau(\overline\chi)\chi(a)\chi(m).
\]
Here \(|\tau(\chi)|\leq\sqrt q\), including for imprimitive
characters. Replacing \(\chi\) by \(\chi^*\) changes the
corresponding von Mangoldt sum only on powers of primes dividing
\(q\). Their contribution, and that of \((m,q)>1\), is absorbed
in \(O(\sqrt q\log^2(qX))\).

For a primitive character of conductor \(f\leq q\), Mellin
inversion and a contour shift give
\begin{equation}\label{eq:explicit-primitive}
 \sum_{m\geq1}\Lambda(m)\chi^*(m)e^{-mz}
 =\frac{\delta_{\chi^*}}{z}
 -\sum_{\rho_{\chi^*}}\Gamma(\rho_{\chi^*})z^{-\rho_{\chi^*}}
 +O_\theta\bigl(\log^2(qX)\bigr),
\end{equation}
where \(\delta_{\chi^*}=1\) for conductor one and is zero otherwise.
The power of \(z\) uses the principal logarithm.
For completeness, start with
\[
 -\frac{1}{2\pi i}\int_{(2)}
 \frac{L'}{L}(s,\chi^*)\Gamma(s)z^{-s}\dd s
\]
and move the line to \(\Rea s=-1/2\). The poles crossed are the
pole at one, the nontrivial zeros, and the possible simple or
double pole of the integrand at zero. Under \eqref{eq:strip},
the last residue is \(O_\theta(\log^2(qX))\): the regular parts
of the logarithmic derivative at zero are bounded using the
functional equation and its partial-fraction formula at one
and two. Indeed, the zeros stay a fixed distance from one and
their sum with weight \((1+|\gamma|^2)^{-1}\) is
\(O(\log(f+3))\).
On the new line the functional equation gives
\(L'/L(s,\chi^*)\ll\log(f(|\Ima s|+2))\).
Stirling's formula bounds the remaining integral by
\[
 O\!\left(|z|^{1/2}\log^2\bigl(q(T+2)\bigr)\right),
\]
which is within the error in \eqref{eq:explicit-primitive}.
Horizontal contours can be taken away from zeros; their
integrals tend to zero because \(|\arg z|<\pi/2\).
The same exponential decay makes the displayed zero sum
absolutely convergent.

Consequently, with \(c_\chi=\tau(\overline\chi)\chi(a)\),
\begin{equation}\label{eq:explicit}
 F_X(a/q+\beta)=\frac{\mu(q)}{\varphi(q)z}
 -\frac{1}{\varphi(q)}\sum_{\chi\bmod q}c_\chi
   \sum_{\rho_{\chi^*}}\Gamma(\rho_{\chi^*})z^{-\rho_{\chi^*}}
 +O_\theta\bigl(\sqrt q\log^2(qX)\bigr).
\end{equation}
The principal character supplies the main term because its
Gauss sum is \(\mu(q)\).

\paragraph{Estimating the zeros.}
For \(\rho=u+i\gamma\), Stirling's formula and
\eqref{eq:strip} give
\begin{equation}\label{eq:gamma}
 |\Gamma(\rho)z^{-\rho}|
 \ll_\theta X^u T^{-u}(1+|\gamma|)^{u-1/2}
             e^{-c|\gamma|/T}
\end{equation}
for an absolute \(c>0\).
If \(u<1/2\), the right side is at most
\(O_\theta(X^{1/2}T^{-1/2}e^{-c|\gamma|/T})\), since
\(X/T\gg\sqrt X\).
After summing by \eqref{eq:total} and multiplying by
\(\sqrt q/\varphi(q)\), all these zeros contribute
\begin{equation}\label{eq:low}
 \ll_{\theta,\eta} X^\eta X^{1/2}B^{1/2}.
\end{equation}

For \(u\geq1/2\), the bound \eqref{eq:gamma} is at most
\[
 C_\theta X^uT^{-1/2}
 (1+|\gamma|/T)^{1/6}e^{-c|\gamma|/T}.
\]
One may discard \(|\gamma|>T(\log X)^2\) with an error smaller
than any fixed negative power of \(X\), by \eqref{eq:total}.
Partition \([1/2,\theta]\) into intervals of length at most
\(1/\log X\), treating the endpoint \(\theta=1/2\) as a single
class. Apply \eqref{eq:density} to the left endpoint of each
interval, with \(U\asymp T(\log X)^2\).
Logarithmic factors, the number of intervals, and
\(q/\varphi(q)\) may all be absorbed in \(X^\eta\).
Together with \eqref{eq:low}, this bounds the zero term in
\eqref{eq:explicit} by
\begin{equation}\label{eq:zerobound}
 \ll_{\theta,\eta}X^\eta\left(
 X^{1/2}B^{1/2}+
 \sup_{1/2\leq\sigma\leq\theta}
 X^\sigma B^{D(\sigma)-1/2}\right).
\end{equation}
Here the factor \(B^{-1/2}\) is precisely the product of the
Gauss-sum factor \(q^{-1/2}\), up to logarithms, and
\(T^{-1/2}\).

For \(1/2\leq\sigma\leq2/3\), one has
\(D(\sigma)-1/2\geq1/4\). Since \(B\ll\sqrt X\), the second
term in \eqref{eq:zerobound} is bounded by the maximum of
\(X^{h(\sigma)}\), where
\[
 h(\sigma)=\sigma-\frac14+\frac{D(\sigma)}2,
 \qquad
 h'(\sigma)=1-\frac{3}{2(2-\sigma)^2}\geq\frac5{32}>0.
\]
Thus the maximum is \(X^{h(\theta)}=X^{a(\theta)}\).
The first term is \(O(X^{3/4})\), and
\(a(\theta)\geq a(1/2)=3/4\).
The last error in \eqref{eq:explicit} is smaller still.
This proves \eqref{eq:local}.
\end{proof}

\subsection{The major-arc model and its Fourier coefficients}

Take \(P=X^{1/3}\). On \(\mathbb R/\mathbb Z\), let \(\MM\)
be the union of the arcs
\[
 \left|\alpha-\frac aq\right|\leq\frac{P}{qX},
 \qquad q\leq P,\quad 0\leq a<q,\quad(a,q)=1.
\]
Distances here are interpreted modulo one. These arcs are
disjoint for all sufficiently large \(X\), since \(2P^2<X\).
Define
\[
 V_X(\alpha)=
 \begin{cases}
 \displaystyle\frac{\mu(q)}{\varphi(q)
       (X^{-1}-2\pi i(\alpha-a/q))},
       &\alpha\text{ lies in the arc about }a/q,\\[6pt]
 0,&\alpha\notin\MM.
 \end{cases}
\]
Lemma~\ref{lem:local} applies on \(\MM\).
For \(\alpha\notin\MM\), Dirichlet approximation supplies
\(q\leq\lfloor\sqrt X\rfloor\) and
\(|\alpha-a/q|\leq1/(q\lfloor\sqrt X\rfloor)\).
For this approximation, either \(q>P\), or
\(qX|\alpha-a/q|>P\). Hence \(B=q(1+X|\alpha-a/q|)>P\),
and the rational main term in \eqref{eq:local} is
\[
 \ll_\eta \frac{X^{1+\eta}}{B}
 \ll_\eta X^{2/3+\eta}.
\]
As \(a(\theta)\geq3/4\), we have the global estimate
\begin{equation}\label{eq:uniform}
 \|F_X-V_X\|_\infty\ll_{\theta,\eta}X^{a(\theta)+\eta}.
\end{equation}

Parseval and an elementary Chebyshev bound give
\begin{equation}\label{eq:L2}
 \|F_X\|_2^2=
 \sum_{m\geq1}\Lambda(m)^2e^{-2m/X}\ll X\log X,
 \qquad
 \|V_X\|_2^2\ll X\sum_{q\leq P}\frac1{\varphi(q)}
 \ll X\log X.
\end{equation}
All norms and integrals on the circle use Lebesgue measure of
total mass one. Therefore
\begin{equation}\label{eq:meansquare}
 \int_0^1|F_X(\alpha)^2-V_X(\alpha)^2|^2\dd\alpha
 \leq \|F_X-V_X\|_\infty^2\|F_X+V_X\|_2^2
 \ll_{\theta,\eta}X^{2a(\theta)+1+2\eta}\log X.
\end{equation}

To identify the model's coefficients, put
\[
 v_X(n)=\int_0^1V_X(\alpha)^2e(-n\alpha)\dd\alpha,
 \qquad
 c_q(n)=\sum_{\substack{0\leq a<q\\(a,q)=1}}e(an/q).
\]
Summing the numerators of each denominator first gives
\begin{equation}\label{eq:v}
 v_X(n)=\sum_{q\leq P}\frac{\mu(q)^2c_q(n)}{\varphi(q)^2}
 \int_{-P/(qX)}^{P/(qX)}
 \frac{e(-n\beta)}{(X^{-1}-2\pi i\beta)^2}\dd\beta.
\end{equation}
For \(n>0\), the full integral over the real line is exactly
\(ne^{-n/X}\). This follows by Fourier inversion applied to the
convolution of \(t\mapsto e^{-t/X}\mathbf1_{t>0}\) with itself.
The absolute value of the omitted tails is \(O(qX/P)\).

Uniformly for \(1\leq n\leq X\), elementary Ramanujan-sum
estimates yield
\begin{equation}\label{eq:ramanujan}
 \sum_{q\leq P}\frac{q|c_q(n)|}{\varphi(q)^2}\ll_\eta X^\eta,
 \qquad
 \sum_{q>P}\frac{|c_q(n)|}{\varphi(q)^2}
 \ll_\eta \frac{X^\eta}{P}.
\end{equation}
For clarity, use \(|c_q(n)|\leq(q,n)\),
\(q/\varphi(q)\ll_\delta q^\delta\), and
\((q,n)=\sum_{d\mid(q,n)}\varphi(d)\).
The first sum then reduces to divisor sums times
\(\sum_{m\leq P/d}m^{-1}\).
For the second, splitting divisors at \(d=P\) gives, for each
\(d\mid n\), a bound \(O_\delta(P^{-1+2\delta})\).
The divisor bound \(d(n)\ll_\delta n^\delta\), with a sufficiently
small \(\delta\), proves \eqref{eq:ramanujan}.

It follows from \eqref{eq:v}--\eqref{eq:ramanujan} that
\begin{equation}\label{eq:mainmodel}
 v_X(n)=ne^{-n/X}\SSS(n)+O_\eta(X^{1+\eta}/P),
 \qquad
 \SSS(n)=\sum_{q\geq1}\frac{\mu(q)^2c_q(n)}{\varphi(q)^2}.
\end{equation}
For even \(n\), its absolutely convergent Euler product is
\begin{equation}\label{eq:singular}
 \SSS(n)=2C_2\prod_{\substack{p\mid n\\p>2}}
                 \frac{p-1}{p-2}\geq2C_2>0,
 \qquad
 C_2=\prod_{p>2}\left(1-\frac1{(p-1)^2}\right).
\end{equation}
Choosing \(\eta<1/3\) in \eqref{eq:mainmodel}, we conclude that
\begin{equation}\label{eq:positive}
 |v_X(n)|\geq cX
 \quad\text{for every even }n\in(X/2,X]
\end{equation}
and all sufficiently large \(X\), for some absolute \(c>0\).

\subsection{Removing prime powers and counting exceptions}

The \(n\)th Fourier coefficient of \(F_X^2\) is
\[
 e^{-n/X}R_\Lambda(n),\qquad
 R_\Lambda(n)=\sum_{u+v=n}\Lambda(u)\Lambda(v).
\]
If \(n\) is not a sum of two primes, every nonzero term in this
last sum involves at least one proper prime power \(p^k\),
\(k\geq2\). There are \(O(\sqrt X\log X)\) such powers up to
\(X\), each contributes at most \((\log X)^2\), and either
summand can be the prime power. Thus, uniformly for such \(n\leq X\),
\begin{equation}\label{eq:primepowers}
 R_\Lambda(n)\ll\sqrt X(\log X)^3=o(X).
\end{equation}
By \eqref{eq:positive} and \eqref{eq:primepowers}, the Fourier
coefficient of \(F_X^2-V_X^2\) has absolute value at least
\(cX/2\) at every exceptional even \(n\in(X/2,X]\), for large \(X\).
Bessel's inequality and \eqref{eq:meansquare} therefore give
\[
 \bigl(E(X)-E(X/2)\bigr)X^2
 \ll_{\theta,\eta}X^{2a(\theta)+1+2\eta}\log X.
\]
Given \(\varepsilon>0\), choose \(\eta>0\) sufficiently small.
This proves
\[
 E(X)-E(X/2)\ll_{\theta,\varepsilon}
 X^{2a(\theta)-1+\varepsilon}.
\]
Since \(2a(\theta)-1\geq1/2\), summing over dyadic intervals
proves Theorem~\ref{thm:main}, with
\(g(\theta)=2a(\theta)-1\).

\subsection{Numerical consequences and the sufficient threshold}

On \([1/2,2/3]\),
\[
 g'(\theta)=2-\frac{3}{(2-\theta)^2}\geq\frac5{16}>0,
 \qquad g(2/3)=\frac7{12}.
\]
Consequently, every fixed \(\theta<2/3\) gives
\(g(\theta)<7/12\). To obtain an actual exponent below \(7/12\)
in \eqref{eq:main}, choose \(\varepsilon\) smaller than this
positive gap. The following are the exponents before adding
the arbitrary \(\varepsilon\).

\begin{center}
\begin{tabular}{@{}ccc@{}}
\toprule
Zero boundary \(\theta\) & Exceptional-set exponent \(g(\theta)\)
 & Decimal value\\
\midrule
\(2/3\)   & \(7/12\)   & \(0.583333\ldots\)\\
\(13/20\) & \(26/45\)  & \(0.577777\ldots\)\\
\(5/8\)   & \(25/44\)  & \(0.568181\ldots\)\\
\(3/5\)   & \(39/70\)  & \(0.557142\ldots\)\\
\(7/12\)  & \(28/51\)  & \(0.549019\ldots\)\\
\(1/2\)   & \(1/2\)    & \(0.500000\)\\
\bottomrule
\end{tabular}
\end{center}

For example, \(\HH_{5/8}\) gives
\[
 E(X)\ll_\varepsilon X^{25/44+\varepsilon},
 \qquad \frac7{12}-\frac{25}{44}=\frac1{66}.
\]
To make the exponent strictly below \(0.55=11/20\), it suffices
to assume a fixed boundary
\[
 \theta<\frac{61-\sqrt{201}}{80}
       =0.5852819140\ldots,
\]
and then take \(\varepsilon\) sufficiently small. Equality in
this last boundary gives \(g(\theta)=0.55\), before the
\(\varepsilon\)-loss.

For comparison, if the density estimate \eqref{eq:density} is
replaced throughout by the total count \eqref{eq:total}, the
same argument gives a local error
\(O_{\theta,\eta}(X^{\theta+1/4+\eta})\) and hence the weaker
bound
\[
 E(X)\ll_{\theta,\varepsilon}X^{2\theta-1/2+\varepsilon}.
\]
That bound beats \(7/12\) only when \(\theta<13/24\).
The use of the fixed-modulus density estimate is what improves
this sufficient threshold to \(2/3\).

\section{Comparison with the Guth--Maynard density estimate}
\label{sec:GM}

Pintz observed that the zero-density improvement of Guth and
Maynard \cite[Theorem~1.2]{GM} does not sharpen the conditional
range used here. Their stated zeta estimate has exponent
\[
 D_{\rm GM}(\sigma)=\frac{15(1-\sigma)}{3+5\sigma},
 \qquad
 N(\sigma,T)\leq T^{D_{\rm GM}(\sigma)+o(1)}.
\]
Comparing it with Ingham's exponent gives, for \(1/2\leq\sigma<1\),
\[
 \frac{15(1-\sigma)}{3+5\sigma}
 <\frac{3(1-\sigma)}{2-\sigma}
 \quad\Longleftrightarrow\quad
 \sigma>\frac7{10}.
\]
The argument in Section~\ref{sec:conditional} uses only
\(1/2\leq\sigma\leq\theta\leq2/3<7/10\).
Thus even an analogous fixed-modulus family bound with this
exponent would not improve \eqref{eq:zerobound} in our range.
Moreover, the zeta estimate by itself does not replace the sum
over all characters modulo \(q\) required in \eqref{eq:density}.
These two points explain why the conditional refinement above
uses the classical estimate.

\section{AI-use declaration and contributions}\label{sec:ai}

ChatGPT, an AI system provided by OpenAI, was used in the development
of this work. Its contribution included proposing and developing
the conditional exponent \(g(\theta)\), checking algebra and
parameter ranges, expanding the circle-method argument, locating
and comparing source statements, and preparing the \LaTeX\ source
and compiled PDF. The conditional part develops the working
argument prepared with Alth\"ofer on 8 October 2026.

Pintz supplied the underlying major-arc method through
\cite{P1,P2} and proposed the structure of the joint paper,
including the combination with \cite{MPR} and \cite{OAI}
and the comparison in Section~\ref{sec:GM}.
Alth\"ofer initiated and directed the collaborative investigation
and the successive refinements of the exceptional-set problem.
The earlier numerical research chain \cite{PA} was also developed
with AI assistance; it is mentioned for context and is not an
input to either proof in this paper.

The use of ChatGPT in this collaboration is separate from the
AI-generated zero-free theorem cited as external work in \cite{OAI}.
The mathematical arguments have not been formalized in Lean.
The scope of the formalization reported for the external \(7/8\)
result is documented separately in \cite{OAILean}; no independent
rebuild of that formalization is claimed here.
The certificates from the earlier numerical chain were not rerun
in preparing this draft. Algebraic checks and successful compilation
are not substitutes for checking the analytic arguments and the
hypotheses of the cited theorems.
The human authors retain responsibility for the final mathematical
content, references, and submission, and this working draft does
not represent their final approval of every statement.

\section*{Acknowledgement}

The first author would like to thank his colleagues and friends
(in alphabetical order) for their help at earlier stages of this work:
S\'andor Krenedits, \'Obuda University;
Alessandro Languasco, Universit\'a di Padova;
M\'at\'e Mr\'az, Szerencsej\'at\'el Plc.;
Szil\'ard Gy.~R\'ev\'esz, HUN-REN Alfr\'ed R\'enyi Institute of Mathematics;
and Gergel\H{o} Szab\'o, HUN-REN Hungarian Research Network.
The second author wants to thank his former postdoc student
Torsten Sillke (now Lufthansa Systems) for regular exchange of ideas.

\bigskip
\noindent\begin{minipage}{\textwidth}
\small
\textbf{J\'anos Pintz}\\
HUN-REN Alfr\'ed R\'enyi Institute of Mathematics\\
Re\'altanoda u.~13--15, H-1053 Budapest, Hungary\\
E-mail: \href{mailto:pintz@renyi.hu}{\nolinkurl{pintz@renyi.hu}}
\end{minipage}

\medskip
\noindent\begin{minipage}{\textwidth}
\small
\textbf{Ingo Alth\"ofer}\\
Friedrich-Schiller University Jena\\
Faculty of Mathematics and Computer Science\\
D-07740 Jena, Germany\\
E-mail: \href{mailto:ingo.althoefer@uni-jena.de}{\nolinkurl{ingo.althoefer@uni-jena.de}}
\end{minipage}
\end{document}